\providecommand{\U}[1]{\protect \rule{.1in}{.1in}}
\newtheorem{theorem}{Theorem}[section]
\newtheorem{definition}[theorem]{Definition}
\newtheorem{lemma}{Lemma}[section]
\newenvironment{proof}[1][Proof]{\noindent \textbf{#1.} }{\  $\Box$}
\title{$L^p$-Solutions of Reflected Backward Doubly Stochastic Differential Equations}
\author{Wen L\"{u}\footnote{Support by the National Basic
Research Program of China (973 Program) grant No. 2007CB814900 and
The Youth Fund of Yantai University (SX08Z9).} \footnote{\emph{Email address:} llcxw@163.com}\\
        \footnotesize{  School of Mathematics, Shandong University, Jinan, \small{250100}, China
 }\\  \footnotesize{School of Mathematics, Yantai University, Yantai 264005, China
 } }
\begin{document}
\date{}
\maketitle
\begin{abstract}
 In this paper,  we deal with a class of one-dimensional
reflected backward doubly stochastic differential equations with one
continuous lower barrier.   We derive the existence and uniqueness
of $L^p$-solutions for those equations with Lipschitz coefficients.
\end{abstract}

\textbf{Keywords:} Reflected backward doubly stochastic differential
equation;  Lipschitz coefficient; $L^p$-solution

\textbf{AMS 2000  Subject Classification:} 60H10

\section{Introduction}
The general nonlinear case backward stochastic differential equation
(BSDE in short) was first introduced in Pardoux and Peng (1990), who
proved the existence and uniqueness result  when the coefficient
 is  Lipschitz. El Karoui et al. (1997a) introduced the notion of one barrier
reflected BSDE , which is actually a backward equation but the
solution is forced to stay above a given barrier. This type of BSDEs
is motivated by pricing American options (see El Karoui et al.
(1997b)) and studying the mixed game problems  (see e.g.
Cvitani\'{c} and Karatzas (1996), Hamad\`{e}ne and Lepeltier
(2000)).  In order to give a probabilistic representation for a
class of quasilinear stochastic partial differential equations,
Pardoux and Peng (1992) first considered a class of backward doubly
stochastic differential equations (BDSDEs) with two different
directions of stochastic integrals.

However in most of the previous works, solutions are taken in $L^2$
space or in $L^p, p>2$. This limits the scope for several
applications. To correct this shortcoming, El Karoui et al.  (1997c)
obtained the first result on the existence and uniqueness of
solution in $L^p$, $p\in(1,2)$ with a Lipschitz coefficient. Briand
et al. (2003) generalized this result to the BSDEs with monotone
coefficients. Following this way, Aman (2009) considered the
$L^p$-solutions of BDSDEs with a monotone coefficient. Moreover,
Hamad\`{e}ne and Popier (2008) established the existence and
uniqueness of the $L^p$-solutions of BSDEs with reflection having a
Lipschitz coefficient.

More recently, Bahalai et al. (2009) obtained the existence and
uniqueness of solution for BDSDEs with one continuous lower barrier,
having a continuous coefficient. Motivated by above works, the
purpose of this paper is to prove the existence and uniqueness of
$L^p$-solutions for reflected BDSDEs with Lipschitz coefficients.

The rest of the paper is organized as follows.  In Section 2, we
introduce some  preliminaries including some spaces.  With the help
of some a priori estimates, Section 3 is devoted to the existence
and uniqueness of $L^p$-solutions for those equations.

 \section{Preliminaries}
Let $T>0$ a fixed real number. Let $\{W_t\}_{t\geq 0},
\{B_t\}_{t\geq 0}$ be two mutually independent standard Brownian
motions defined on a complete probability space $(\Omega,
\mathcal{F}, {\bf P})$ with values in ${\bf R}^d$ and ${\bf R}$,
respectively. For $t\in[0,T]$, we define
$$\mathcal{F}_t=\mathcal{F}_t^W\vee \mathcal{F}_{t,T}^B,$$
where $\mathcal{F}_t^W=\sigma\{W_s, 0\leq s\leq t\},
\mathcal{F}_{t,T}^B=\sigma\{B_s-B_t, t\leq s\leq T\}$ completed with
the  ${\bf P}$-null sets. We note that the collection $\{\mathcal
{F}_t; t\in[0, T]\}$ is neither increasing nor decreasing, so it
does not constitute a classical filtration.  The Euclidean norm of a
vector $y\in {\bf R}^n$ will be
 defined by $|y|$.

 Throughout the  paper, we always assume that $p\in(1,2)$. Now, let's introduce the following spaces :\\
$\mathcal{M}^p_d=\{\psi: [0, T]\times\Omega\rightarrow{\bf R}^d,$
predictable, such that ${\bf
E}[(\int_0^T|\psi_s|^2ds)^{\frac{p}{2}}]<\infty\}$;
\\
$\mathcal{S}^p=\{\psi: [0, T]\times\Omega\rightarrow{\bf R},$
progressively measurable, s.t. ${\bf E}
(\sup_{t\in[0,T]}|\psi_t|^p)<\infty\}$;
\\
$\mathcal{S}^p_{ci}=\{A: [0, T]\times\Omega\rightarrow{\bf R}_+,$
continuous, increasing, s.t. $A_0=0$ and ${\bf E}|A_T|^p<\infty\}$.

  The object in this paper is the following reflected BDSDE:
\begin{eqnarray}\label{bsde:1}
\left\{ \begin{array}{l@{\quad \quad}r}Y_t=\xi+\int_t^T f(s, Y_s,
Z_s)ds+\int_t^T g(s, Y_s, Z_s)dB_s\\\quad\qquad+K_T-K_t-\int_t^T
Z_sdW_s,\\
Y_t\geq L_t, \; 0\leq t\leq T \;\mbox{a.s. and}\;
\int_0^T(Y_t-L_t)dK_t=0, \,\mbox{a.s.}
\end{array}\right.
\end{eqnarray}
where the $dW$ is a standard forward It\^{o} integral  and the $dB$
is a backward It\^{o} integral.

On the items $\xi, f, g$ and $L$, we make the following assumptions:
\\
(\textbf{H1}) The terminal condition $\xi: \Omega\rightarrow{\bf R},
\mathcal
{F}_T$-measurable such that ${\bf E}|\xi|^p<\infty$;\\
(\textbf{H2}) the functions $f,\, g: [0,T]\times\Omega\times {\bf
R}\times {\bf R}^d\rightarrow {\bf R}$ are jointly measurable and
satisfy:

  (i) ${\bf E}[(\int_0^T|f_s^0|^2ds)^{\frac{p}{2}}]<\infty,\; {\bf
  E}[(\int_0^T|g_s^0|^2ds)^{\frac{p}{2}}]<\infty$,\\
\indent\quad where $f_s^0=:f(s,0,0), g_s^0=:g(s,0,0)$;

  (ii) $\forall t\in[0,T],
(y_1,z_1),(y_2,z_2)\in{\bf R}\times{\bf R}^d$, there exist
  constants $C>0$\\
\indent\quad and $0<\alpha<1$ such that
\begin{eqnarray*}\label{lip:1}
|f(t,y_1,z_1)-f(t,y_2,z_2)|&\leq& C(|y_1-y_2|+
|z_1-z_2|),\\
|g(t,y_1,z_1)-g(t,y_2,z_2)|^2&\leq& C|y_1-y_2|^2+ \alpha|z_1-z_2|^2;
\end{eqnarray*}
 (\textbf{H3}) The barrier  $\{L_t, t\in[0, T]\}$ is a real valued
progressively measurable process such that $E(\sup_{0\leq t\leq
T}(L_t^+)^p)<\infty$ and $L_T\leq \xi$ a.s..

Let's  give the notion of $L^p$-solution of reflected BDSDE
(\ref{bsde:1}).
\begin{definition} An $L^p$-solution of the reflected BDSDE (\ref{bsde:1}) is a triple of progressively measurable processes $(Y,Z,K)$
 satisfying (\ref{bsde:1}) such that  $(Y,Z,K)\in \mathcal{S}^p\times\mathcal{M}^p_d\times\mathcal{S}^p_{ci}$.
\end{definition}

The following lemma  is a slight generalization of Corollary 2.3 in
Briand et al. (2003).
\begin{lemma}\label{lemma:0}
Let $(Y,Z)\in\mathcal{S}^p \times\mathcal{M}^p_d$ is a solution of
the following BDSDE :
\begin{eqnarray*}
&&|Y_t|=\xi+\int_t^T\widetilde{f}(s,Y_s,Z_s)ds
+\int_t^T\widetilde{g}(s,Y_s,Z_s)dB_s+A_T-A_t-\int_t^TZ_sdW_s,
\end{eqnarray*}
where:

 (i) $\widetilde{f}$ and $\widetilde{g}$ are functions which satisfy the
   assumptions as $f$ and $g$,

(ii) ${\bf P}$-a.s. the process $(A_t)_{t\in[0,T]}$ is of bounded
variation type.\\
Then for any $0\leq t\leq u\leq T$, we have
\begin{eqnarray*}
&&|Y_t|^p+c(p)\int_t^T|Y_s|^{p-2}{\bf 1}_{\{Y_s\neq 0\}}|Z_s|^2ds
\\&\leq&
|Y_u|^p+p\int_t^T|Y_s|^{p-1}\widehat{Y_s}dA_s+p\int_t^T|Y_s|^{p-1}\widetilde{f}(s,
Y_s, Z_s)ds
\\&&
+c(p)\int_t^T|Y_s|^{p-2}{\bf 1}_{\{Y_s\neq
0\}}|\widetilde{g}(s,Y_s,Z_s)|^2ds
\\&&
+p\int_t^T|Y_s|^{p-1} \widehat{Y_s}
\widetilde{g}(s,Y_s,Z_s)dB_s-p\int_t^T|Y_s|^{p-1}\widehat{Y_s}Z_sdW_s,
\end{eqnarray*}
where $c(p)=\frac{p(p-1)}{2}$ and $\widehat{y}=\frac{y}{|y|}{\bf
1}_{\{y\neq 0\}}$.
\end{lemma}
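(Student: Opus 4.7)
The plan is to adapt the smooth-approximation argument underlying Corollary 2.3 of Briand et al.\ (2003) to the present BDSDE framework with an extra bounded-variation process $A$. Since $y \mapsto |y|^p$ fails to be $C^2$ at the origin when $p \in (1,2)$, I would first mollify it by setting $u_\epsilon(y) := (|y|^2 + \epsilon)^{p/2}$ for $\epsilon > 0$, which yields
\[
u_\epsilon'(y) = p\,(|y|^2+\epsilon)^{p/2-1}y, \qquad u_\epsilon''(y) = p\,(|y|^2+\epsilon)^{p/2-2}\bigl[(p-1)|y|^2 + \epsilon\bigr],
\]
together with the uniform-in-$\epsilon$ bounds $|u_\epsilon'(y)| \leq p|y|^{p-1}$ and $u_\epsilon''(y) \leq p(|y|^2+\epsilon)^{p/2-1}$. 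I would then apply the classical Pardoux--Peng It\^{o} formula for BDSDEs to $u_\epsilon(Y_s)$ on the interval $[t,u]$: since $u_\epsilon \in C^2$ and $A$ is of bounded variation, this produces an identity whose first-order part carries $u_\epsilon'(Y_s)$ against $\widetilde{f}\,ds$, $dA_s$, $\widetilde{g}\,dB_s$ and $-Z_s\,dW_s$, and whose second-order correction takes the form $\tfrac{1}{2}u_\epsilon''(Y_s)\bigl[|\widetilde{g}(s,Y_s,Z_s)|^2 - |Z_s|^2\bigr]\,ds$; the sign discrepancy between $|\widetilde{g}|^2$ and $|Z|^2$ reflects the fact that the backward integral $dB$ enters the BDSDE quadratic variation with opposite sign to the forward integral $dW$.

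The second step is to pass to the limit $\epsilon \downarrow 0$. Pointwise $u_\epsilon(y) \to |y|^p$ and $u_\epsilon'(y) \to p|y|^{p-1}\widehat{y}$; combined with the $\mathcal{S}^p \times \mathcal{M}^p_d$ integrability of $(Y,Z)$, the integrability of $dA$, the linear growth of $\widetilde{f}$ from \textbf{H2}, and the uniform bound $|u_\epsilon'(y)| \leq p|y|^{p-1}$, this permits dominated convergence in the $\widetilde{f}\,ds$, $dA_s$ and $\int Z\,dW$ terms, and, via the Burkholder--Davis--Gundy inequality, convergence in probability for the backward stochastic integral $\int u_\epsilon'(Y)\widetilde{g}\,dB$. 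The $\int u_\epsilon''(Y_s)|Z_s|^2\,ds$ term sitting on the left is handled by Fatou's lemma, yielding the lower bound $\liminf_\epsilon \int u_\epsilon''(Y_s)|Z_s|^2\,ds \geq 2c(p)\int|Y_s|^{p-2}{\bf 1}_{\{Y_s\neq 0\}}|Z_s|^2\,ds$; this is precisely what turns the equality given by It\^{o}'s formula into the inequality asserted in the lemma.

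The main obstacle I anticipate is the second-order $|\widetilde{g}|^2$ contribution on the right-hand side: $u_\epsilon''$ blows up at the origin and $\widetilde{g}(s,0,Z_s)$ need not vanish, so naive dominated convergence fails on $\{Y_s = 0\}$. I would control this by combining the bound $u_\epsilon''(y) \leq p(|y|^2+\epsilon)^{p/2-1}$ with the quadratic Lipschitz hypothesis \textbf{H2}(ii): splitting $|\widetilde{g}(s,Y_s,Z_s)|^2$ into contributions proportional to $|Y_s|^2$, $|Z_s|^2$ and $|\widetilde{g}^0_s|^2$, the first and third pieces give integrands that can be dominated uniformly in $\epsilon$ by the assumed integrability, while the $\alpha|Z_s|^2$ piece, thanks to $\alpha < 1$, can be absorbed into the $|Z|^2$ term already present on the left. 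Once this absorption is carried out the remaining integrands converge by dominated convergence to $c(p)\int|Y_s|^{p-2}{\bf 1}_{\{Y_s\neq 0\}}|\widetilde{g}|^2\,ds$, and collecting the limiting terms yields the stated inequality.
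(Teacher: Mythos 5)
Your overall strategy (mollify $|y|^p$ by $u_\epsilon(y)=(|y|^2+\epsilon)^{p/2}$, apply the Pardoux--Peng It\^{o} formula, pass to the limit) is the standard route to this kind of statement, and the paper itself offers no proof to compare against --- it simply cites Corollary 2.3 of Briand et al.\ (2003). You also correctly identify the one place where the doubly stochastic setting genuinely differs from Briand et al.: the second-order term $\tfrac12\int u_\epsilon''(Y_s)|\widetilde{g}(s,Y_s,Z_s)|^2\,ds$ sits on the \emph{bad} side of the inequality, so Fatou is unavailable and one must actually prove convergence. The problem is that your proposed resolution does not close this gap. Splitting $|\widetilde{g}(s,Y_s,Z_s)|^2$ via \textbf{H2}(ii) into pieces proportional to $|Y_s|^2$, $|Z_s|^2$ and $|\widetilde{g}^0_s|^2$ leaves the term $u_\epsilon''(Y_s)\,|\widetilde{g}^0_s|^2$, and on $\{Y_s=0\}$ this equals $p\,\epsilon^{p/2-1}|\widetilde{g}^0_s|^2\to\infty$: it is neither dominated uniformly in $\epsilon$ (contrary to what you assert for the ``third piece'') nor absorbable into the left-hand side. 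Moreover, even where the absorption of the $\alpha|Z_s|^2$ piece works, it irreversibly changes the inequality: you would end up with a coefficient smaller than $c(p)$ in front of $\int|Y_s|^{p-2}{\bf 1}_{\{Y_s\neq 0\}}|Z_s|^2\,ds$ on the left and with right-hand terms in $|Y_s|^p$ and $|Y_s|^{p-2}|\widetilde{g}^0_s|^2$ rather than the stated $c(p)\int|Y_s|^{p-2}{\bf 1}_{\{Y_s\neq 0\}}|\widetilde{g}(s,Y_s,Z_s)|^2\,ds$. So the argument as written proves a different (weaker, differently shaped) estimate, not the lemma.

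The missing ingredient is the occupation times formula. The process $Y$ is a continuous semimartingale whose quadratic variation is $d\langle Y\rangle_s=\bigl(|Z_s|^2+|\widetilde{g}(s,Y_s,Z_s)|^2\bigr)ds$ (both the forward and the backward martingale parts contribute with a plus sign), and $\int_0^T{\bf 1}_{\{Y_s=0\}}\,d\langle Y\rangle_s=\int_{\bf R}{\bf 1}_{\{a=0\}}L_T^a\,da=0$. Hence ${\bf 1}_{\{Y_s=0\}}|Z_s|^2=0$ and ${\bf 1}_{\{Y_s=0\}}|\widetilde{g}(s,Y_s,Z_s)|^2=0$, $ds\otimes d{\bf P}$-a.e., so the set $\{Y_s=0\}$ contributes nothing to either second-order term for \emph{every} $\epsilon>0$. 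Off that set one has $u_\epsilon''(y)\le p|y|^{p-2}$ and $u_\epsilon''(y)\to p(p-1)|y|^{p-2}=2c(p)|y|^{p-2}$, so dominated convergence (when the right-hand side of the lemma is finite; otherwise there is nothing to prove) gives the $|\widetilde{g}|^2$ term with the exact constant $c(p)$, and Fatou gives the $|Z_s|^2$ term on the left. With this replacement for your third paragraph, the rest of your argument (treatment of $u_\epsilon'$, the $dA$ term, and the two stochastic integrals) is sound.
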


\section{Main results}

\subsection{A priori estimates}
In order to obtain the existence and uniqueness result for solution
of the reflected BDSDE (\ref{bsde:1}), we first provide some a
priori estimates of solution  of (\ref{bsde:1}).

 In what follows,
$d, d_1, d_2,\cdots$ will be denoted as a constant whose value
depending only on $C, \alpha, p$ and possibly $T$. We also denote by
$\theta_1, \theta_2,\cdots$ the constants which taking  value in
$(0,\infty)$ arbitrarily.
\begin{lemma}\label{lemma:1}
Let the assumptions (H1)-(H3) hold and let $(Y,Z, K )$ be a solution
of the reflected BDSDE (\ref{bsde:1}). If $Y\in\mathcal {S}^p$ then
$Z\in\mathcal {M}^p_d$ and  there exists a constant $d>0$ such that
$$
{\bf E}\left[\left(\int_0^T |Z_s|^2ds\right)^{\frac{p}{2}}\right]
\leq d\,{\bf E}\left[\sup_{t\in[0,T]}|Y_t|^p+\left(\int_0^T
|f_s^0|^2ds\right)^{\frac{p}{2}}+\left(\int_0^T
|g_s^0|^2ds\right)^{\frac{p}{2}}\right].
$$
\end{lemma}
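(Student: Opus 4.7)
The plan is to apply It\^{o}'s formula to $|Y_t|^2$ over $[0,T]$, isolate $\int_0^T|Z_s|^2\,ds$, and then raise to the power $p/2$ before taking expectation. For a solution of the BDSDE under consideration, It\^{o}'s formula yields
\begin{eqnarray*}
|Y_0|^2+\int_0^T|Z_s|^2\,ds &=& |\xi|^2+2\int_0^T Y_s f(s,Y_s,Z_s)\,ds+\int_0^T|g(s,Y_s,Z_s)|^2\,ds\\
&&+2\int_0^T Y_s\,dK_s+2\int_0^T Y_s g(s,Y_s,Z_s)\,dB_s-2\int_0^T Y_s Z_s\,dW_s,
\end{eqnarray*}
the sign in front of $\int|g|^2\,ds$ being due to the backward It\^{o} integral.

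Using (H2) together with standard Young splittings, I would bound $2|Y_s||f(s,Y_s,Z_s)|\leq |Y_s|^2+|f_s^0|^2+2C|Y_s|^2+2C|Y_s||Z_s|$ and $|g(s,Y_s,Z_s)|^2\leq 2|g_s^0|^2+2C|Y_s|^2+2\alpha|Z_s|^2$, then split $2C|Y_s||Z_s|\leq \eta|Z_s|^2+C^2|Y_s|^2/\eta$ with $\eta>0$ chosen so that $2\alpha+\eta<1$ (possible since $\alpha<1$). For the reflection term, the Skorohod condition $\int_0^T Y_s\,dK_s=\int_0^T L_s\,dK_s$ combined with $L_s^+\leq Y_s^+\leq|Y_s|$ (a consequence of $L_s\leq Y_s$) yields $\int_0^T Y_s\,dK_s\leq\bigl(\sup_{s\in[0,T]}|Y_s|\bigr)K_T$. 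After rearrangement, the coefficient of $\int_0^T|Z_s|^2\,ds$ on the right-hand side is strictly less than one.

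Next I would raise the resulting inequality to the $p/2$-th power, use subadditivity $(\sum a_i)^{p/2}\leq\sum a_i^{p/2}$ (valid for $p/2\leq 1$), take expectation, and apply Burkholder--Davis--Gundy to the two stochastic integrals. Each BDG bound has the form ${\bf E}\bigl[\sup_s|Y_s|^{p/2}\bigl(\int_0^T|Z_s|^2\,ds\bigr)^{p/4}\bigr]$ (or with $|g|^2$ in place of $|Z|^2$), which Young's inequality splits into an ${\bf E}\sup|Y|^p$ piece plus a small multiple of ${\bf E}\bigl(\int|Z|^2\,ds\bigr)^{p/2}$ to be absorbed on the left. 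The reflection contribution ${\bf E}\bigl[(\sup|Y|)^{p/2}K_T^{p/2}\bigr]$ splits analogously into $\theta_1{\bf E}\sup|Y|^p+\theta_1^{-1}{\bf E}|K_T|^p$.

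The main obstacle is controlling ${\bf E}|K_T|^p$, which does not appear in the final estimate. I would extract $K_T=Y_0-\xi-\int_0^T f(s,Y_s,Z_s)\,ds-\int_0^T g(s,Y_s,Z_s)\,dB_s+\int_0^T Z_s\,dW_s$ from the equation at $t=0$, raise to the $p$-th absolute power, and apply BDG plus the Lipschitz bounds (H2) to obtain ${\bf E}|K_T|^p\leq d_1{\bf E}\bigl[\sup|Y|^p+\bigl(\int_0^T|f_s^0|^2\,ds\bigr)^{p/2}+\bigl(\int_0^T|g_s^0|^2\,ds\bigr)^{p/2}\bigr]+d_2{\bf E}\bigl(\int|Z|^2\,ds\bigr)^{p/2}$; the remaining $Z$ term is absorbed once more with a small Young constant. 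Carefully choosing all absorption parameters (the strict inequality $\alpha<1$ providing the slack) yields the asserted bound and, in particular, establishes $Z\in\mathcal{M}_d^p$.
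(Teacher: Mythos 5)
Your overall scheme is the same as the paper's: It\^{o}'s formula applied to $|Y_t|^2$, Young splittings driven by (H2), extraction of $K_T$ from the equation to control the reflection term, then raising to the power $p/2$, BDG, and absorption. (The paper additionally carries an $e^{as}$ weight to kill the $\int|Y_s|^2ds$ terms, but bounding them by $T\sup_s|Y_s|^2$ as you implicitly do is equally fine, and the paper's handling of $K$ at the squared pathwise level versus your $L^p$-level estimate of ${\bf E}|K_T|^p$ is an immaterial variation.) There are, however, two problems.

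The serious one is the absence of localization. You work directly on $[0,T]$ and at the end "absorb a small multiple of ${\bf E}\bigl[(\int_0^T|Z_s|^2ds)^{p/2}\bigr]$ on the left." But the finiteness of exactly this quantity is the conclusion of the lemma: a priori a solution only gives $\int_0^T|Z_s|^2ds<\infty$ a.s., not $Z\in\mathcal{M}^p_d$. If ${\bf E}\bigl[(\int_0^T|Z_s|^2ds)^{p/2}\bigr]=+\infty$, the subtraction step is meaningless and the argument is circular. The paper avoids this by introducing $\tau_n=\inf\{t:\int_0^t|Z_s|^2ds\geq n\}\wedge T$, running the whole estimate on $[0,\tau_n]$ where every term is finite (and where the stochastic integrals are genuinely integrable), and then letting $n\to\infty$ via Fatou's lemma. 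You need this, or some equivalent truncation, for the proof to close; it also justifies taking expectations of the stochastic-integral terms.

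A smaller, fixable slip: you bound $|g(s,Y_s,Z_s)|^2\leq 2|g_s^0|^2+2C|Y_s|^2+2\alpha|Z_s|^2$ and then ask for $2\alpha+\eta<1$, claiming this is possible because $\alpha<1$. It is not possible when $\alpha\geq\tfrac12$. You must use the asymmetric split $|g|^2\leq(1+\theta)\bigl(C|Y|^2+\alpha|Z|^2\bigr)+(1+\theta^{-1})|g^0|^2$ with $\theta$ small, so that the coefficient of $|Z_s|^2$ is $(1+\theta)\alpha<1$; this is precisely how the paper exploits $\alpha<1$. With localization added and this constant corrected, your argument matches the paper's proof.
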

\begin{proof}
 For each integer $n\geq 0$, let's define the stopping
time
\begin{eqnarray*}
\tau_n=\inf\{t\in[0,T], \;\int_0^t|Z_s|^2ds\geq n\}\wedge T.
\end{eqnarray*}
Let $a\in{\bf R}$,  using It\^{o}'s formula and assumption (H2), we get 
\begin{eqnarray*}
&&|Y_0|^2+\int_0^{\tau_n}e^{as}|Z_s|^2ds\\
&=&e^{a\tau_n}|Y_{\tau_n}|^2-a\int_0^{\tau_n}e^{as}|Y_s|^2ds+2\int_0^{\tau_n}e^{as}Y_sf(s,Y_s,Z_s)ds
\\&&+\int_0^{\tau_n}e^{as}|g(s,Y_s,Z_s)|^2ds+2\int_0^{\tau_n}e^{as}Y_sdK_s\\&&+2\int_0^{\tau_n}e^{as}Y_sg(s,Y_s,Z_s)dB_s
-2\int_0^{\tau_n}e^{as}Y_sZ_sdW_s
\\&\leq&e^{a\tau_n}|Y_{\tau_n}|^2-a\int_0^{\tau_n}e^{as}|Y_s|^2ds\\&&+\frac{1}{\theta_1}\int_0^{\tau_n}e^{as}|Y_s|^2ds
+\theta_1\int_0^{\tau_n}e^{as}[4C^2(|Y_s|^2+|Z_s|^2)+2|f_s^0|^2]ds
\\&&
+(1+\theta_1)\int_0^{\tau_n}e^{as}\left(C|Y_s|^2+\alpha|Z_s|^2\right)ds+(1+\frac{1}{\theta_1})\int_0^{\tau_n}e^{as}|g_s^0|^2ds\\&&
+\frac{1}{\theta_2}\sup_{t\in[0,\tau_n]}e^{2at}|Y_t|^2+\theta_2|K_{\tau_n}|^2\\&&+2\int_0^{\tau_n}e^{as}Y_sg(s,Y_s,Z_s)dB_s
-2\int_0^{\tau_n}e^{as}Y_sZ_sdW_s.
\end{eqnarray*}

 On the other hand, from the equation
\begin{eqnarray*}
 K_{\tau_n}
=Y_0-Y_{\tau_n}-\int_0^{\tau_n}f(s,Y_s,Z_s)ds-\int_0^{\tau_n}g(s,Y_s,Z_s)dB_s+\int_0^{\tau_n}Z_sdW_s,
\end{eqnarray*}
we have
\begin{eqnarray*}\label{est:2}
|K_{\tau_n}|^2&\leq&
 d_1\left[|Y_0|^2+|Y_{\tau_n}|^2+(\int_0^{\tau_n}|f_s^0|ds)^2+\int_0^{\tau_n}(|Y_s|^2+|Z_s|^2)ds\right.\nonumber\\&&
\left.+|\int_0^{\tau_n}g(s,Y_s,Z_s)dB_s|^2+|\int_0^{\tau_n}Z_sdW_s|^2\right].
\end{eqnarray*}
Plugging this last inequality in the previous one to get
\begin{eqnarray*}
&&(1-d_1\theta_2)|Y_0|^2+(1-4C^2\theta_1-\alpha(1+\theta_1))\int_0^{\tau_n}e^{as}|Z_s|^2ds-d_1\theta_2\int_0^{\tau_n}|Z_s|^2ds
\\&\leq&(d_1\theta_2+e^{a\tau_n})|Y_{\tau_n}|^2+\left(\frac{1}{\theta_1}+4C^2\theta_1+C(1+\theta_1)-a\right)\int_0^{\tau_n}e^{as}|Y_s|^2ds
\\&&+2\theta_1\int_0^{\tau_n}e^{as}|f_s^0|^2ds+(1+\frac{1}{\theta_1})\int_0^{\tau_n}e^{as}|g_s^0|^2ds
+\frac{1}{\theta_2}\sup_{t\in[0,\tau_n]}e^{2at}|Y_t|^2\\&&
+d_1\theta_2\left[\int_0^{\tau_n}|f_s^0|^2ds+\int_0^{\tau_n}|Y_s|^2ds+|\int_0^{\tau_n}g(s,Y_s,Z_s)dB_s|^2+|\int_0^{\tau_n}Z_sdW_s|^2\right]
\\&&+2|\int_0^{\tau_n}e^{as}Y_sg(s,Y_s,Z_s)dB_s|
+2|\int_0^{\tau_n}e^{as}Y_sZ_sdW_s|.
\end{eqnarray*}
Choosing now $\theta_1,\theta_2$ small enough and $a>0$ such that $
\frac{1}{\theta_1}+4C^{2}\theta_1+C(1+\theta_1)-a<0, $ we obtain
\begin{eqnarray}\label{est:1}
\int_0^{\tau_n}|Z_s|^2ds
&\leq&d_2\left(\sup_{t\in[0,\tau_n]}|Y_t|^2+\int_0^{\tau_n}e^{as}|g_s^0|^2ds+\int_0^{\tau_n}e^{as}|f_s^0|^2ds
\right.\nonumber\\&&\left.+|\int_0^{\tau_n}e^{as}Y_sg(s,Y_s,Z_s)dB_s|+|\int_0^{\tau_n}e^{as}Y_sZ_sdW_s|\right.\nonumber
\\&&\left.+\theta_2|\int_0^{\tau_n}g(s,Y_s,Z_s)dB_s|^2+\theta_2
|\int_0^{\tau_n}Z_sdW_s|^2\right),
\end{eqnarray}
it follows that
\begin{eqnarray*}
&&{\bf E}\left(\int_0^{\tau_n}|Z_s|^2ds\right)^{\frac{p}{2}}
\nonumber\\&\leq&d_3{\bf
E}\left[\sup_{t\in[0,\tau_n]}|Y_t|^p+\left(\int_0^{\tau_n}|g_s^0|^2ds\right)^{\frac{p}{2}}+\left(\int_0^{\tau_n}|f_s^0|^2ds\right)^{\frac{p}{2}}
\right.\nonumber\\&&\left.+\left(
|\int_0^{\tau_n}e^{as}Y_sg(s,Y_s,Z_s)dB_s|\right)^{\frac{p}{2}}
+\left(
|\int_0^{\tau_n}e^{as}Y_sZ_sdW_s|\right)^{\frac{p}{2}}\right.\nonumber\\&&\left.
+\theta_2^{\frac{p}{2}}|\int_0^{\tau_n}g(s,Y_s,Z_s)dB_s|^p+\theta_2^{\frac{p}{2}}
|\int_0^{\tau_n}Z_sdW_s|^p\right].
\end{eqnarray*}

By the Burkh\"{o}lder-Davis-Gundy and Young's inequalities, we have
\begin{eqnarray*}\label{est:5}
&&{\bf
E}\left[\left|\int_0^{\tau_n}e^{as}Y_sg(s,Y_s,Z_s)dB_s\right|^{\frac{p}{2}}\right]
\nonumber\\&\leq& d_4{\bf
E}\left[\left(\int_0^{\tau_n}|Y_s|^2|g(s,Y_s,Z_s)|^2ds\right)^{\frac{p}{4}}\right]
\nonumber\\&\leq& d_4{\bf
E}\left[(\sup_{t\in[0,\tau_n]}|Y_t|)^{\frac{p}{2}}\left(\int_0^{\tau_n}|g(s,Y_s,Z_s)|^2ds\right)^{\frac{p}{4}}\right]
\nonumber\\&\leq& (\frac{d_4}{\theta_3}+\theta_3){\bf
E}[\sup_{t\in[0,\tau_n]}|Y_t| ^{p}]\nonumber\\&&+\theta_3{\bf
E}\left[\left(\int_0^{\tau_n}|g_s^0|^2ds\right)^{\frac{p}{2}}+\left(\int_0^{\tau_n}|Z_s|^2ds\right)^{\frac{p}{2}}\right]
\end{eqnarray*}
and
\begin{eqnarray*}
{\bf
E}\left[\left|\int_0^{\tau_n}e^{as}Y_sZ_sdW_s\right|^{\frac{p}{2}}\right]\leq
 \frac{d_5}{\theta_3} {\bf E}\left[\sup_{t\in[0,\tau_n]}|Y_t|
^{p}\right]+\theta_3{\bf
E}\left[\left(\int_0^{\tau_n}|Z_s|^2ds\right)^{\frac{p}{2}}\right].
\end{eqnarray*}

Plugging the two last inequalities in the previous one and using
 the Burkh\"{o}lder-Davis-Gundy inequality once again, it follows after
choosing $\theta_2$, $\theta_3 $ small enough (s.t. (\ref{est:1})
holds too):
\begin{eqnarray*}
&&{\bf E}\left[\left(\int_0^{\tau_n}
|Z_s|^2ds\right)^{\frac{p}{2}}\right]
\\&\leq& d\,{\bf E}\left[\sup_{t\in[0,\tau_n]}|Y_t|^p+\left(\int_0^{\tau_n}
|f_s^0|^2ds\right)^{\frac{p}{2}}+\left(\int_0^{\tau_n}
|g_s^0|^2ds\right)^{\frac{p}{2}}\right].
\end{eqnarray*}
Finally, we get the desired result by Fatou's Lemma.
\end{proof}
\begin{lemma}\label{lemma:2}
 Assume that (H1)-(H3) hold, let $(Y, Z, K)$ be a solution of the reflected
BDSDE (\ref{bsde:1}) where $Y\in\mathcal {S}^p$. Then there exists a
constant $d>0$ such that
\begin{eqnarray*}
&&{\bf E}\left[\sup_{t\in[0,T]}|Y_t|^p+\left(\int_0^T
|Z_s|^2ds\right)^{\frac{ p}{2}}+|K_T|^p\right] \\&\leq& d\,{\bf
E}\left[|\xi|^p+\left(\int_0^T|f_s^0|^2ds\right)^{\frac{p}{2}}
+\left(\int_0^T|g_s^0|^2ds\right)^{\frac{p}{2}}\right.\\&&\qquad\left.+\sup_{t\in[0,T]}(L_t^+)^p+\int_0^T|Y_s|^{p-2}{\bf
I}_{\{Y_s\neq 0\}}|g_s^0|^2ds \right].
\end{eqnarray*}
\end{lemma}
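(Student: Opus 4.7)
The plan is to apply Lemma \ref{lemma:0} to the reflected BDSDE (with $\widetilde f = f$, $\widetilde g = g$, $A = K$) and then control each resulting term by a mixture of the Lipschitz hypothesis (H2), the Skorohod condition on $(K,L)$, Young's inequality and the Burkh\"older--Davis--Gundy (BDG) inequality, closing the estimate with the help of Lemma \ref{lemma:1}. Concretely, Lemma \ref{lemma:0} on $[t,T]$ gives for every $t\in[0,T]$,
\begin{eqnarray*}
&&|Y_t|^p + c(p)\int_t^T |Y_s|^{p-2}\mathbf{1}_{\{Y_s\neq 0\}}|Z_s|^2 ds\\
&\le& |\xi|^p + p\int_t^T|Y_s|^{p-1}\widehat{Y_s}dK_s + p\int_t^T |Y_s|^{p-1}\widehat{Y_s}f(s,Y_s,Z_s)ds \\
&& + c(p)\int_t^T |Y_s|^{p-2}\mathbf{1}_{\{Y_s\neq 0\}}|g(s,Y_s,Z_s)|^2 ds + M_t,
\end{eqnarray*}
where $M_t$ collects the two stochastic integrals. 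The key trick for the $dK$ term is that the Skorohod condition $\int_0^T(Y_s-L_s)dK_s=0$ together with $Y_s\ge L_s$ forces $|Y_s|^{p-1}\widehat{Y_s}\,dK_s\le (L_s^+)^{p-1}dK_s$, hence $p\int_t^T|Y_s|^{p-1}\widehat{Y_s}dK_s\le p\sup_s(L_s^+)^{p-1}\cdot K_T$; a Young step then turns this into $\epsilon|K_T|^p+C_\epsilon\sup_s(L_s^+)^p$, which leaves the $K_T^p$ part absorbable later.

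Next, expand the drivers via (H2): $|f|\le|f^0|+C(|Y|+|Z|)$ and $|g|^2\le(1+\theta)(C|Y|^2+\alpha|Z|^2)+(1+\theta^{-1})|g^0|^2$. Young's inequality controls $|Y|^{p-1}|Z|$ by a small multiple of $|Y|^{p-2}\mathbf{1}_{\{Y\ne 0\}}|Z|^2$ plus $|Y|^p$, and $|Y|^{p-1}|f^0|$ by $|Y|^p+|f^0|^p$ (with $\int_0^T|f^0|^p ds$ in turn bounded by $(\int_0^T|f^0|^2 ds)^{p/2}$ via H\"older, since $p<2$). Choosing $\theta$ small enough that $(1+\theta)\alpha<1$ then lets the $|Y|^{p-2}\mathbf{1}|Z|^2$ contributions arising from both $f$ and $g$ be absorbed into the $c(p)\int|Y|^{p-2}\mathbf{1}|Z|^2$ on the LHS; this leaves the $|Y|^{p-2}\mathbf{1}|g^0|^2$-term isolated, which is precisely why it appears on the right-hand side of the statement. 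Taking $\sup_t$ and expectation, BDG on $M_t$ yields, for example, ${\bf E}\sup_t\bigl|\int_t^T|Y_s|^{p-1}\widehat{Y_s}Z_s dW_s\bigr|\le c{\bf E}(\int_0^T|Y|^{2p-2}|Z|^2 ds)^{1/2}\le\epsilon'{\bf E}\sup|Y|^p+C_{\epsilon'}{\bf E}(\int_0^T|Z|^2 ds)^{p/2}$, and analogously for the $dB$ integral (using the $|g|^2$ bound above). To handle $|K_T|^p$, rewrite $K_T=Y_0-\xi-\int_0^T f ds+\int_0^T Z dW-\int_0^T g dB$ from the BDSDE and apply BDG once more to obtain ${\bf E}|K_T|^p\le d'{\bf E}[\sup|Y|^p+|\xi|^p+(\int|f^0|^2)^{p/2}+(\int|g^0|^2)^{p/2}+(\int|Z|^2)^{p/2}]$. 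Finally, invoke Lemma \ref{lemma:1} to trade ${\bf E}(\int|Z|^2 ds)^{p/2}$ for ${\bf E}\sup|Y|^p$ plus driver data, and close the estimate via a Gronwall argument applied to $\varphi(t):={\bf E}\sup_{s\in[t,T]}|Y_s|^p$.

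The main technical obstacle is the coordinated choice of the several small parameters ($\theta$, $\epsilon$, $\epsilon'$ and their cousins from the $K_T$ bound and the various Young/BDG applications) so that each ``bad'' term produced on the RHS --- the $(1+\theta)\alpha$-fraction of $\int|Y|^{p-2}\mathbf{1}|Z|^2$, the BDG-induced fractions of ${\bf E}\sup|Y|^p$ and of ${\bf E}(\int|Z|^2)^{p/2}$, and the $\epsilon{\bf E}|K_T|^p$ from the reflection term --- is strictly smaller than its LHS counterpart, yielding a single universal constant $d$. The hypothesis $\alpha<1$ in (H2) is precisely what makes this simultaneous absorption feasible: without it the $|Y|^{p-2}|Z|^2$ cross-term produced by $|g|^2$ would not be absorbable and no such $L^p$ estimate would follow.
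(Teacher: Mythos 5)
Your architecture is the same as the paper's: apply Lemma \ref{lemma:0} with $A=K$, kill the reflection term via the Skorohod condition ($|Y_s|^{p-1}\widehat{Y_s}\,dK_s\le (L_s^+)^{p-1}dK_s$ since $Y_s=L_s$ on the support of $dK$), expand $f$ and $g$ via (H2), absorb the $|Y|^{p-2}\mathbf{1}_{\{Y\neq 0\}}|Z|^2$ contributions using $\alpha<1$, control $|K_T|^p$ from the equation itself together with Lemma \ref{lemma:1}, and finish with BDG. (Your Gronwall on $\varphi(t)$ plays the role of the paper's exponential weight $e^{aps}$ with $a$ large; that difference is immaterial.)

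There is, however, one step that does not close as you have written it: the BDG estimate
$$
{\bf E}\sup_t\Bigl|\int_t^T|Y_s|^{p-1}\widehat{Y_s}Z_s\,dW_s\Bigr|\le \epsilon'\,{\bf E}\sup_t|Y_t|^p+C_{\epsilon'}\,{\bf E}\Bigl(\int_0^T|Z_s|^2ds\Bigr)^{p/2},
$$
followed by ``invoke Lemma \ref{lemma:1} to trade ${\bf E}(\int|Z|^2)^{p/2}$ for ${\bf E}\sup|Y|^p$ plus data.'' After that trade the coefficient of ${\bf E}\sup|Y|^p$ on the right is $\epsilon'+C_{\epsilon'}d$, where $d$ is the \emph{fixed} constant of Lemma \ref{lemma:1} (depending only on $C,\alpha,p,T$) and $C_{\epsilon'}\to\infty$ as $\epsilon'\to 0$. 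No choice of $\epsilon'$ makes this less than $1$, so the term cannot be absorbed; unlike the reflection term, where the large Young constant lands on the pure data $\sup(L^+)^p$, here it lands on a quantity that must itself be converted back into ${\bf E}\sup|Y|^p$. The paper avoids this by splitting differently: $\bigl(\int|Y|^{2p-2}|Z|^2ds\bigr)^{1/2}\le \sup|Y|^{p/2}\bigl(\int|Y|^{p-2}\mathbf{1}_{\{Y\neq0\}}|Z|^2ds\bigr)^{1/2}\le \tfrac14\sup|Y|^p+c\int|Y|^{p-2}\mathbf{1}_{\{Y\neq0\}}|Z|^2ds$, and the last integral has \emph{already} been bounded, before taking suprema, by the data plus a multiple $d_9\theta_4^p\,{\bf E}\sup|Y|^p$ in which the small factor $\theta_4^p$ is at your disposal (this is the point of establishing the pointwise-in-$t$ estimate (\ref{est:12}) first). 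The same remark applies to the $dB$ integral. With that replacement your argument goes through; as stated, the absorption fails.
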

\begin{proof}
 From Lemma \ref{lemma:0}, for any $a\in {\bf R}$ and
any $0\leq t \leq T$, we have
\begin{eqnarray}\label{est:6}
&&e^{apt}|Y_t|^p+c(p)\int_t^Te^{aps}|Y_s|^{p-2}{\bf 1}_{\{Y_s\neq
0\}}|Z_s|^2ds\nonumber
\\&\leq&
e^{apT}|\xi|^p+p\int_t^Te^{aps}|Y_s|^{p-1}\widehat{Y_s}f(s, Y_s,
Z_s)ds+p\int_t^Te^{aps}|Y_s|^{p-1}\widehat{Y_s}dK_s\nonumber
\\&&
+c(p)\int_t^Te^{aps}|Y_s|^{p-2}{\bf 1}_{\{Y_s\neq
0\}}|g(s,Y_s,Z_s)|^2ds-ap\int_t^Te^{aps}|Y_s|^pds\nonumber
\\&&
+p\int_t^Te^{aps}|Y_s|^{p-1} \widehat{Y_s}
g(s,Y_s,Z_s)dB_s-p\int_t^Te^{aps}|Y_s|^{p-1}\widehat{Y_s}Z_sdW_s.
\end{eqnarray}
 By assumption (H2)
and  Young's inequality, we obtain
\begin{eqnarray}\label{est:7}
&&p\,{\bf E}\left[\int_t^Te^{aps}|Y_s|^{p-1}\widehat{Y_s}f(s, Y_s,
Z_s)ds\right]\nonumber
\\&\leq&{\bf E}[p\int_t^Te^{aps}|Y_s|^{p-1}|f_s^0|ds+Cp\int_t^Te^{aps}|Y_s|^{p-1}(|Y_s|+|Z_z|)ds]\nonumber
\\&\leq&(p-1)\theta_4^{\frac{p}{p-1}}{\bf E}\left(\sup_{s\in[0,T]}|Y_s|^p\right)
+\theta_4^{-p}{\bf
E}\left(\int_t^Te^{aps}|f_s^0|ds\right)^{p}\nonumber
\\&&+(Cp+\frac{p^2C^2}{2c(p)\theta_4}){\bf E}\left[\int_t^Te^{aps}|Y_s|^{p}ds\right]\nonumber
\\&&+\frac{c(p)}{2}\theta_4{\bf E}\left[\int_t^Te^{aps}|Y_s|^{p-2}{\bf 1}_{\{Y_s\neq
0\}}|Z_s|^2ds\right]
\end{eqnarray}
and
\begin{eqnarray}\label{est:8}
&&c(p)\,{\bf E}[\int_t^Te^{aps}|Y_s|^{p-2}{\bf 1}_{\{Y_s\neq
0\}}|g(s,Y_s,Z_s)|^2ds]\nonumber
\\&\leq&c(p)(1+\frac{1}{\theta_4}){\bf E}[\int_t^Te^{aps}|Y_s|^{p-2}{\bf 1}_{\{Y_s\neq
0\}}|g_s^0|^2ds]\nonumber\\&&+c(p)C(1+\theta_4){\bf
E}[\int_t^Te^{aps}|Y_s|^{p}ds]\nonumber\\&&+
c(p)\alpha(1+\theta_4){\bf E}[\int_t^Te^{aps}|Y_s|^{p-2}{\bf
I}_{\{Y_s\neq 0\}}|Z_s|^2ds].
\end{eqnarray}
Moreover, since $dK_s={\bf 1}_{\{Y_s\leq L_s\}}dK_s$,  we get from
Young's inequality
\begin{eqnarray*}
&&p\,{\bf E}[\int_t^Te^{aps}|Y_s|^{p-1}\widehat{Y_s}dK_s]
\\&\leq&
p\,{\bf E}[\int_t^Te^{aps}|Y_s|^{p-1}\widehat{Y_s}{\bf I}_{\{Y_s\leq
L_s\}}dK_s]
\\&\leq&p\,{\bf E}[\int_t^Te^{aps}|L_s|^{p-1}\widehat{L_s}dK_s]
\\&\leq&p\,{\bf E}[(\sup_{s\in[0,T]}L_s^+)^{p-1}\int_t^Te^{aps}dK_s]
\\&\leq&
\frac{p-1}{\theta_4^{\frac{p-1}{p}}}{\bf
E}[\sup_{s\in[0,T]}(L_s^+)^{p}]+\theta_4^p{\bf
E}(\int_t^Te^{aps}dK_s)^p
\\&\leq&
d_6\left[\theta_4^{-\frac{p-1}{p}}{\bf
E}(\sup_{s\in[0,T]}(L_s^+)^{p})+\theta_4^p\, {\bf E}|K_T|^p\right].
\end{eqnarray*}
On the other hand, by assumption (H2), the
Burkh\"{o}lder-Davis-Gundy inequality and Lemma 3.1, we have
\begin{eqnarray}\label{est:9}
 {\bf E}|K_T|^p \leq d_7
 {\bf E}\left[\sup_{s\in[0,T]}|Y_s|^p+\left(\int_0^T|f_s^0|ds\right)^p+\left(\int_0^T|g_s^0|ds\right)^p\right],
\end{eqnarray}
it follows that
\begin{eqnarray}\label{est:10}
 p\,{\bf E}[\int_t^Te^{aps}|Y_s|^{p-1}\widehat{Y_s}dK_s]\nonumber&\leq&
 d_8 {\bf E}\left[\theta_4^p\sup_{s\in[0,T]}|Y_s|^p+\theta_4^{-\frac{p-1}{p}}\sup_{s\in[0,T]}(L_s^+)^{p}\right.\\&&\left.
 +\theta_4^p\left(\int_0^T|f_s^0|ds\right)^p+\theta_4^p\left(\int_0^T|g_s^0|ds\right)^p\right].
\end{eqnarray}
Combining (\ref{est:7})-(\ref{est:9}), taking expectation on both
sides of (\ref{est:6}) to obtain
\begin{eqnarray*}
&&{\bf E}\left[e^{apt}|Y_t|^p+c(p)\int_t^Te^{aps}|Y_s|^{p-2}{\bf
I}_{\{Y_s\neq 0\}}|Z_s|^2ds\right]\nonumber
\\&\leq&
{\bf
E}\left\{e^{apT}|\xi|^p+d_8\theta_4^{-\frac{p-1}{p}}\sup_{s\in[0,T]}(L_s^+)^{p}
+\left[(p-1)\theta_4^{\frac{p}{p-1}}+d_8\theta_4^p\right]\sup_{s\in[0,T]}|Y_s|^p\right.\nonumber\\&&\left.
+\left[(Cp+\frac{p^2C^2}{2c(p)\theta_4})+c(p)C(1+\theta_4)-ap\right]\int_t^Te^{aps}|Y_s|^{p}ds\right.\nonumber\\&&\left.
+\left[\frac{c(p)}{2}\theta_4+c(p)\alpha(1+\theta_4)\right]\int_t^Te^{aps}|Y_s|^{p-2}{\bf
I}_{\{Y_s\neq 0\}}|Z_s|^2ds \right.\nonumber\\&&\left.
+\theta_4^{-p}\left(\int_t^Te^{aps}|f_s^0|ds\right)^{p}+d_8\theta_4^{p}
\left(\int_t^T|f_s^0|ds\right)^{p}+d_8\theta_4^{p}\left(\int_t^Te^{aps}|g_s^0|ds\right)^{p}
\right.\nonumber\\&&\left.
+\left[c(p)(1+\frac{1}{\theta_4})+d_8\theta_4^{p}\right]
\int_t^Te^{aps}|Y_s|^{p-2}{\bf 1}_{\{Y_s\neq
0\}}|g_s^0|^2ds\right\}.
\end{eqnarray*}
Choosing $\theta_4$ small enough and $a>0$ such that
\begin{eqnarray}\label{est:11}
(Cp+\frac{p^2C^2}{2c(p)\theta_4})+c(p)C(1+\theta_4)-ap<0,
\end{eqnarray}
 we get
\begin{eqnarray}\label{est:12}
&&{\bf E}\left[e^{apt}|Y_t|^p+c(p)\int_t^Te^{aps}|Y_s|^{p-2}{\bf
I}_{\{Y_s\neq 0\}}|Z_s|^2ds\right]\nonumber
\\&\leq&
d_9 {\bf
E}\left[|\xi|^p+\sup_{s\in[0,T]}(L_s^+)^{p}+\left(\int_t^T|f_s^0|^2ds\right)^{\frac{p}{2}}
+\left(\int_t^T|g_s^0|^2ds\right)^{\frac{p}{2}}\right.\nonumber\\&&\left.+\int_t^Te^{aps}|Y_s|^{p-2}{\bf
I}_{\{Y_s\neq 0\}}|g_s^0|^2ds\right]+d_9\theta_4^p\, {\bf
E}[\sup_{s\in[0,T]}|Y_s|^{p}].
\end{eqnarray}
Next using the Burkh\"{o}lder-Davis-Gundy inequality we have
\begin{eqnarray}\label{est:13}
&&{\bf E}\left[\sup_{t\in[0,T]}\left
|p\int_0^Te^{aps}|Y_s|^{p-1}\widehat{Y}_sZ_sdW_s\right|\right]\nonumber\\&\leq&
\frac{1}{4}{\bf E}[\sup_{t\in[0,T]}e^{apt}|Y_t|^{p}]+d_{10}{\bf E}
\left(\int_0^Te^{aps}|Y_s|^{p-2}{\bf 1}_{\{Y_s\neq
0\}}|Z_s|^2ds\right)
\end{eqnarray}
and
\begin{eqnarray}\label{est:14}
&&{\bf
E}\left[\sup_{t\in[0,T]}\left|\int_0^Te^{aps}|Y_s|^{p-1}\widehat{Y}_sg(s,Y_s,Z_s)dB_s\right|\right]\nonumber\\&\leq&
\frac{1}{4}{\bf
E}\left[\sup_{t\in[0,T]}e^{apt}|Y_t|^{p}\right]+d_{11}{\bf E}
\int_0^Te^{aps}|Y_s|^{p-2}{\bf 1}_{\{Y_s\neq
0\}}|g(s,Y_s,Z_s)|^2ds\nonumber
\\&\leq&
\frac{1}{4}{\bf E}[\sup_{t\in[0,T]}e^{apt}|Y_t|^{p}]+ d_{11}{\bf E}
\left[\int_0^Te^{aps}|Y_s|^{p-2}{\bf 1}_{\{Y_s\neq
0\}}|g_s^0|^2ds\right.\nonumber\\&&
\left.+\int_0^Te^{aps}|Y_s|^{p}ds+\int_0^Te^{aps}|Y_s|^{p-2}{\bf
I}_{\{Y_s\neq 0\}}|Z_s|^2ds\right].
\end{eqnarray}
Next going back to (\ref{est:6}), using the
Burkh\"{o}lder-Davis-Gundy inequality together with the inequalities
(\ref{est:12})-(\ref{est:14}), we get after choosing $\theta_4$
small enough (s.t. inequality (\ref{est:11}) holds too)
\begin{eqnarray*}
&& {\bf
E}\left[\sup_{t\in[0,T]}e^{apt}|Y_t|^p+\int_t^Te^{aps}|Y_s|^{p-2}{\bf
I}_{\{Y_s\neq 0\}}|Z_s|^2ds\right] \\&\leq& d\,{\bf
E}\left[|\xi|^p+(\int_0^T|f_s^0|^2ds)^{\frac{p}{2}}
+(\int_0^T|g_s^0|^2ds)^{\frac{p}{2}}\right.\\&&\left.
+\sup_{t\in[0,T]}(L_t^+)^p+\int_0^Te^{aps}|Y_s|^{p-2}{\bf
I}_{\{Y_s\neq 0\}}|g_s^0|^2ds\right].
\end{eqnarray*}
We then complete the proof  by the inequality (\ref{est:9}).
\end{proof}

\begin{lemma}\label{lemma:3}
Let  $(Y',Z',K')$ and $(Y,Z,K)$  be  the solution of the reflected
BDSDE (\ref{bsde:1}) associated with  $(\xi', f', g', L)$ and $(\xi,
f, g, L)$ respectively, where $(\xi', f', g', L)$ and $(\xi, f, g,
L)$ satisfy assumptions (H1)-(H3). Then
\begin{eqnarray*}
&&{\bf E}\left[\sup_{t\in[0,T]}|Y'_t-Y_t|^p+\left(\int_0^T
|Z'_s-Z_s|^2ds\right)^{\frac{ p}{2}}\right]
\\&\leq&
d\,{\bf E}\left[|\xi'-\xi|^p+(\int_0^T|f'(s, Y_s, Z_s)-f(s,
Y_s,Z_s)|^2ds)^{\frac{p}{2}}\right.\\&&\left.\qquad+(\int_0^T|g'(s,
Y_s,Z_s)-g(s, Y_s,Z_s))|^2ds)^{\frac{p}{2}}\right].
\end{eqnarray*}
\end{lemma}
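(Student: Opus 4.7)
The plan is to parallel the proof of Lemma~\ref{lemma:2}, applied to the difference triple $(\Delta Y,\Delta Z,\Delta K):=(Y'-Y,Z'-Z,K'-K)$, with the crucial new feature that both solutions reflect against the \emph{same} barrier $L$. Subtracting the two reflected BDSDEs, $(\Delta Y,\Delta Z,\Delta K)$ satisfies a BDSDE with terminal value $\Delta\xi=\xi'-\xi$, drivers $\widetilde f_s := f'(s,Y'_s,Z'_s)-f(s,Y_s,Z_s)$ and $\widetilde g_s := g'(s,Y'_s,Z'_s)-g(s,Y_s,Z_s)$, and finite-variation process $\Delta K$. Apply Lemma~\ref{lemma:0} to $|\Delta Y_t|^p$ with exponential weight $e^{apt}$, exactly as was done in Lemma~\ref{lemma:2}.

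The central new observation is that the reflection contribution is \emph{nonpositive}: because $dK'_s$ is supported on $\{Y'_s=L_s\}$ where $Y'_s\leq Y_s$ (since $Y_s\geq L_s$), the integrand $|\Delta Y_s|^{p-1}\widehat{\Delta Y}_s$ is $\leq 0$ on the support of $dK'$, and symmetrically $\geq 0$ on the support of $dK$. Hence $p\int_t^T e^{aps}|\Delta Y_s|^{p-1}\widehat{\Delta Y}_s\, d(\Delta K)_s \leq 0$, and this term may be dropped. The drivers are next split by adding and subtracting $f'(s,Y_s,Z_s)$ and $g'(s,Y_s,Z_s)$: the Lipschitz pieces produce integrands proportional to $|\Delta Y|^p$ and $|\Delta Y|^{p-2}|\Delta Z|^2$, and thanks to $\alpha<1$ and a small Young parameter, the latter are absorbed on the LHS just as in Lemma~\ref{lemma:2}; the residual source pieces $\Delta f_s := f'(s,Y_s,Z_s)-f(s,Y_s,Z_s)$ and $\Delta g_s := g'(s,Y_s,Z_s)-g(s,Y_s,Z_s)$ are handled via Young's inequality. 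Once the analog of (\ref{est:12}) holds, apply the Burkh\"older-Davis-Gundy inequality as in (\ref{est:13})-(\ref{est:14}) and take $\sup_t$ to absorb the remaining $\sup_t|\Delta Y|^p$ contributions. For the $\Delta Z$-estimate, a difference version of Lemma~\ref{lemma:1} applies directly, since $\Delta K$ has bounded variation and is controlled by the other terms exactly as $K$ was there.

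The main obstacle will be controlling the integrand $|\Delta Y_s|^{p-2}{\bf 1}_{\{\Delta Y_s\neq 0\}}|\Delta g_s|^2$ produced by the quadratic-variation contribution of Lemma~\ref{lemma:0}: for $p\in(1,2)$, the singularity of $|y|^{p-2}$ at $y=0$ blocks any naive pointwise bound by $|\Delta Y|^p+|\Delta g_s|^p$. The standard remedy is to apply Lemma~\ref{lemma:0} instead to the regularization $(|\Delta Y_s|^2+\varepsilon)^{p/2}$, derive the resulting estimate uniformly in $\varepsilon>0$, and pass to the limit $\varepsilon\downarrow 0$; combined with Jensen's inequality $\int_0^T|\Delta g_s|^p\,ds\leq T^{1-p/2}\bigl(\int_0^T|\Delta g_s|^2\,ds\bigr)^{p/2}$, this produces the $(\int|\Delta g|^2\,ds)^{p/2}$ bound in the statement, and the analogous Young step $p|\Delta Y|^{p-1}|\Delta f_s|\leq \eta|\Delta Y|^p+C_\eta|\Delta f_s|^p$ followed by Jensen yields the $(\int|\Delta f|^2\,ds)^{p/2}$ bound.
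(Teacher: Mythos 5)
Your overall architecture coincides with the paper's: form the difference triple $(\overline Y,\overline Z,\overline K)=(Y'-Y,Z'-Z,K'-K)$, observe that it solves a BDSDE whose drivers are Lipschitz in $(y,z)$, check that the reflection contribution $p\int_t^T e^{aps}|\overline Y_s|^{p-1}\widehat{\overline Y_s}\,d\overline K_s$ is nonpositive (your support argument is exactly the paper's computation, written there as $-\int e^{aps}|\overline Y_s|^{p-2}{\bf 1}_{\{\overline Y_s\neq 0\}}(Y'_s-L_s)dK_s-\int e^{aps}|\overline Y_s|^{p-2}{\bf 1}_{\{\overline Y_s\neq 0\}}(Y_s-L_s)dK'_s\leq 0$), and then rerun the arguments of Lemmas \ref{lemma:1} and \ref{lemma:2}. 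Up to that point you reproduce the paper's proof.

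The gap is in your treatment of $c(p)\int_t^T|\overline Y_s|^{p-2}{\bf 1}_{\{\overline Y_s\neq 0\}}|\Delta g_s|^2ds$ with $\Delta g_s:=g'(s,Y_s,Z_s)-g(s,Y_s,Z_s)$, which you rightly single out as the obstacle. The regularization $(|\overline Y_s|^2+\varepsilon)^{p/2}$ does not produce an estimate uniform in $\varepsilon$: the relevant second-derivative weight is $p(|y|^2+\varepsilon)^{(p-4)/2}\bigl((p-1)|y|^2+\varepsilon\bigr)$, comparable to $(|y|^2+\varepsilon)^{(p-2)/2}$, which still blows up as $(y,\varepsilon)\to(0,0)$, so the coefficient multiplying $|\Delta g_s|^2$ is not bounded independently of $\varepsilon$; and the subsequent Young--Jensen step you invoke presupposes exactly the pointwise inequality $|y|^{p-2}|g|^2\leq\eta|y|^p+C_\eta|g|^p$ that you yourself correctly noted is false for $p\in(1,2)$. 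So your remedy does not close the gap. To be fair, the paper does not close it either: its proof consists of the sign computation plus ``follow the steps of Lemmas 3.1 and 3.2,'' and those steps, applied verbatim, leave $\int_0^T|\overline Y_s|^{p-2}{\bf 1}_{\{\overline Y_s\neq 0\}}|\Delta g_s|^2ds$ on the right-hand side (just as the conclusion of Lemma \ref{lemma:2} retains $\int_0^T|Y_s|^{p-2}{\bf 1}_{\{Y_s\neq 0\}}|g_s^0|^2ds$), not the stated $\bigl(\int_0^T|\Delta g_s|^2ds\bigr)^{p/2}$. In the lemma's only application (Theorem \ref{theorem:1}) one has $g'=g$, hence $\Delta g\equiv 0$ and the problematic term vanishes; but as a proof of Lemma \ref{lemma:3} as stated, both your argument and the paper's are incomplete at this exact point.
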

\begin{proof}   The proof of the lemma is a combination of the proofs
of Lemmas 3.1 and 3.2 with a slight change. Indeed, let
 $$\overline{\xi}=:\xi'-\xi, \quad(\overline{Y},\overline{Z},\overline{K})=:(Y'-Y,Z'-Z,K'-K).$$
One can  easily to check that
$(\overline{Y},\overline{Z},\overline{K})$ is a solution to the
following BDSDE:
\begin{eqnarray*}
\overline{Y}_t=\overline{\xi}+\int_t^Th(s,\overline{Y}_s,
\overline{Z}_s)ds+\int_t^Tk(s,\overline{Y}_s,
\overline{Z}_s)dB_s+\overline{K}_T-\overline{K}_t-\int_t^T\overline{Z}_sdW_s,
\end{eqnarray*}
where
\begin{eqnarray*}
 h(s,y,z)=:f'(s, y+Y_s,z+Z_s)-f(s,Y_s,Z_s),\\
 k(s,y,z)=:g'(s, y+Y_s,z+Z_s)-g(s,Y_s,Z_s).
 \end{eqnarray*}
 Obviously, the functions $h$ and $k$ are Lipschitz w.r.t
 $(y,z)$.

 Let's note that
 \begin{eqnarray*}
 \int_0^te^{aps}\overline{Y}_sd\overline{K}_s=-\int_0^te^{aps}(Y'_s-L_s)dK_s-\int_0^te^{aps}(Y_s-L_s)dK'_s\leq0
 \end{eqnarray*}
 and
 \begin{eqnarray*}
 \int_0^te^{aps}|\overline{Y}_s|^{p-1}\widehat{\overline{Y}_s}d\overline{K}_s
 &=&-\int_0^te^{aps}|\overline{Y}_s|^{p-2}{\bf 1}_{\{\overline{Y}_s\neq
0\}}(Y'_s-L_s)dK_s\\&&-\int_0^te^{aps}|\overline{Y}_s|^{p-2}{\bf
I}_{\{\overline{Y}_s\neq 0\}}(Y_s-L_s)dK'_s\\&\leq&0.
 \end{eqnarray*}
The rest of the proof follows It\^{o}'s formula, Lemma 2.1 and the
steps similar to those in the proofs  of Lemmas 3.1 and
3.2.\end{proof}

\subsection{Existence and uniqueness of a solution}

In order to obtain the existence and uniqueness result, we make the
following supplementary assumption:
\\
(H4) $g(\cdot, 0,0)\equiv 0$.

The following result due to Bahlali et al. (2009).
\begin{lemma}\label{lemma:4}
Let $p=2$. Assume that (H1)-(H3) hold. Then the reflected BDSDE
(\ref{bsde:1}) has a unique solution $(Y,Z,K)\in\mathcal {S}^2\times
\mathcal{M}^2_d\times \mathcal {S}^2_{ci}$.
\end{lemma}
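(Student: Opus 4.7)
The statement is attributed to Bahlali et al.\ (2009), so in writing out a proof I would essentially reconstruct their argument, which is the standard penalization scheme adapted from El Karoui et al.\ (1997a) to the doubly stochastic framework. The plan is to approximate \eqref{bsde:1} by a sequence of (non-reflected) BDSDEs, show that these converge in the appropriate $\mathcal{S}^2\times\mathcal{M}^2_d$ norm, and then identify the limit of the penalization term as an increasing process $K$ satisfying the Skorokhod minimality condition.

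Concretely, for each integer $n\geq 1$ I would consider the penalized BDSDE
\begin{eqnarray*}
Y^n_t=\xi+\int_t^T f(s,Y^n_s,Z^n_s)ds+\int_t^T g(s,Y^n_s,Z^n_s)dB_s
+n\int_t^T(Y^n_s-L_s)^-ds-\int_t^T Z^n_s dW_s,
\end{eqnarray*}
whose existence and uniqueness in $\mathcal{S}^2\times\mathcal{M}^2_d$ follow from the Pardoux--Peng (1992) theorem for BDSDEs with Lipschitz coefficients (the penalization term $y\mapsto n(y-L_s)^-$ is globally Lipschitz in $y$). Setting $K^n_t=n\int_0^t(Y^n_s-L_s)^-ds$, the first step is to establish uniform $L^2$ a priori estimates on $(Y^n,Z^n,K^n)$ by an It\^o-type expansion of $e^{as}|Y^n_s|^2$ exactly in the spirit of Lemma \ref{lemma:1} (using the condition $\alpha<1$ to absorb the $|Z^n|^2$-coefficient coming from $g$). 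The second step, which is the technical heart of the argument, is the comparison principle for BDSDEs: it gives $Y^n_t\leq Y^{n+1}_t$ a.s., so that $Y^n\uparrow Y$ pointwise, and combined with the uniform bounds one deduces that $(Y^n_s-L_s)^-\to 0$ in $L^2(dP\otimes dt)$.

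With monotone convergence of $Y^n$ and the uniform estimates in hand, I would pass to the limit: $Z^n\to Z$ in $\mathcal{M}^2_d$ and $K^n\to K$ in $\mathcal{S}^2_{ci}$ (after showing $K^n$ is Cauchy by another It\^o expansion on $|Y^n-Y^m|^2$), producing a triple $(Y,Z,K)\in\mathcal{S}^2\times\mathcal{M}^2_d\times\mathcal{S}^2_{ci}$ solving \eqref{bsde:1} with $Y\geq L$. The Skorokhod condition $\int_0^T(Y_s-L_s)dK_s=0$ is obtained as in El Karoui et al.\ (1997a) by writing $\int_0^T(Y^n_s-L_s)dK^n_s=-n\int_0^T((Y^n_s-L_s)^-)^2 ds\leq 0$, passing to the limit using uniform convergence of $Y^n$ on $[0,T]$ along a subsequence, and using $Y\geq L$ to conclude the integral is zero.

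For uniqueness, given two solutions $(Y,Z,K)$ and $(Y',Z',K')$, I would apply It\^o's formula to $e^{as}|Y_s-Y'_s|^2$, use the fact that $\int_0^T(Y_s-Y'_s)d(K_s-K'_s)\leq 0$ (which follows from the Skorokhod condition and $Y,Y'\geq L$, exactly as in the calculation used in the proof of Lemma \ref{lemma:3}), and then choose the exponential weight $a$ large enough to absorb the Lipschitz contributions together with the $\alpha<1$ bound on $g$. The main obstacle is the passage to the limit in the reflection term: controlling $K^n$ in $\mathcal{S}^2_{ci}$ and verifying the Skorokhod condition simultaneously with the convergence of the backward It\^o integral $\int g(s,Y^n_s,Z^n_s)dB_s$, which requires care because the filtration $\{\mathcal{F}_t\}$ is not a standard filtration and the usual dominated convergence arguments for stochastic integrals must be applied separately to the forward $dW$ and backward $dB$ parts.
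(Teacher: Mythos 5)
The paper does not actually prove this lemma---it is stated as a quotation of Bahlali et al.\ (2009)---so there is no internal argument to compare against; your penalization scheme is the standard and correct route to this $L^2$ result and is essentially the argument of the cited reference (and of El Karoui et al.\ (1997a) in the non--doubly--stochastic case). The one place your sketch is materially thinner than the real proof is the passage from the uniform bounds to the convergence of $(Y^n_\cdot-L_\cdot)^-$: the bound $\mathbf{E}|K^n_T|^2\leq d$ only gives $(Y^n-L)^-\to 0$ in $L^2(d\mathbf{P}\otimes dt)$, whereas to make $(Z^n)$ Cauchy in $\mathcal{M}^2_d$ one must control $\mathbf{E}\int_0^T(Y^n_s-L_s)^-dK^m_s$, which requires $\mathbf{E}[\sup_{t}((Y^n_t-L_t)^-)^2]\to 0$; that stronger statement needs the Dini/Snell-envelope argument of El Karoui et al.\ on top of the comparison principle, and should be spelled out rather than folded into ``combined with the uniform bounds.''
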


We now state and prove our main result.

\begin{theorem}\label{theorem:1}
Assume (H1)-(H4), then the reflected BDSDE (\ref{bsde:1}) has a
unique solution $(Y,Z,K)\in\mathcal {S}^p\times
\mathcal{M}^p_d\times \mathcal {S}^p_{ci}$.
\end{theorem}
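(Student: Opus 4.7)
The plan is to deduce uniqueness directly from Lemma~\ref{lemma:3} and to establish existence by approximating the data by $L^2$-truncations, solving each approximate equation by the $L^2$ result of Lemma~\ref{lemma:4}, and passing to the limit in $\mathcal{S}^p\times\mathcal{M}^p_d\times\mathcal{S}^p_{ci}$. For uniqueness, if $(Y,Z,K)$ and $(Y',Z',K')$ are two $L^p$-solutions of (\ref{bsde:1}) with the same data $(\xi,f,g,L)$, then Lemma~\ref{lemma:3} applied with $(\xi',f',g')=(\xi,f,g)$ has a vanishing right-hand side, which forces $Y=Y'$ in $\mathcal{S}^p$ and $Z=Z'$ in $\mathcal{M}^p_d$; the BDSDE~(\ref{bsde:1}) itself then gives $K=K'$.

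For existence, let $q_n(x):=(x\wedge n)\vee(-n)$ and set
$$\xi_n:=q_n(\xi),\qquad f_n(s,y,z):=f(s,y,z)-f^0(s)+q_n(f^0(s)),\qquad L^n_t:=L_t\wedge n,$$
with $g_n:=g$ unchanged. Hypothesis (H4) is crucial here: since $g^0\equiv 0$, no truncation of the diffusion coefficient is needed and the term $\int_0^T|Y_s|^{p-2}\mathbf{1}_{\{Y_s\ne 0\}}|g_s^0|^2\,ds$ appearing in Lemma~\ref{lemma:2} drops out. One checks that $(\xi_n,f_n,g,L^n)$ satisfies (H1)--(H3) with $L^2$-integrability, and that $L^n_T\leq\xi_n$ follows from $L_T\leq\xi$ together with the monotonicity of $q_n$ and the truncation of $L$. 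Lemma~\ref{lemma:4} then produces a unique solution $(Y^n,Z^n,K^n)\in\mathcal{S}^2\times\mathcal{M}^2_d\times\mathcal{S}^2_{ci}$, which lies a fortiori in $\mathcal{S}^p\times\mathcal{M}^p_d\times\mathcal{S}^p_{ci}$.

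I would then show that $(Y^n,Z^n)$ is Cauchy in $\mathcal{S}^p\times\mathcal{M}^p_d$ using a mild extension of Lemma~\ref{lemma:3} applied to $(Y^n,Z^n,K^n)$ and $(Y^m,Z^m,K^m)$. Because these two solutions involve distinct barriers $L^n$ and $L^m$, the residual term $\int_0^Te^{aps}|\overline{Y}_s|^{p-1}\widehat{\overline{Y}_s}\,d\overline{K}_s$ is only nonpositive up to a remainder bounded by $\|L^n-L^m\|_{\mathcal{S}^p}\bigl(\|K^n_T\|_{L^p}+\|K^m_T\|_{L^p}\bigr)$; by Lemma~\ref{lemma:2} the $L^p$-norms of $K^n_T$ are bounded uniformly in $n$, and Young's inequality absorbs the remaining factor. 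Since $\xi_n\to\xi$ in $L^p$, $q_n(f^0)\to f^0$ in $\mathcal{M}^p_1$, and $L^n\to L$ in $\mathcal{S}^p$ by dominated convergence, the right-hand side tends to zero, yielding a limit $(Y,Z)\in\mathcal{S}^p\times\mathcal{M}^p_d$. Defining $K$ through (\ref{bsde:1}) produces $K^n\to K$ uniformly in probability, whence $K$ is continuous and nondecreasing with $K_0=0$; the Skorokhod condition $\int_0^T(Y_s-L_s)\,dK_s=0$ passes to the limit from $\int_0^T(Y^n_s-L^n_s)\,dK^n_s=0$ by uniform convergence of $Y^n-L^n$ to $Y-L$. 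Finally, Lemma~\ref{lemma:2} applied to $(Y,Z,K)$ confirms $K\in\mathcal{S}^p_{ci}$.

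The main technical obstacle is the extension of the stability estimate of Lemma~\ref{lemma:3} to solutions with distinct barriers: handling the $(L^n-L^m)$-residual requires the uniform $L^p$-bound on $K^n_T$ from Lemma~\ref{lemma:2}, which relies decisively on (H4). Once this extended estimate is in hand, the passage to the limit, the identification of $K$, and the verification of the Skorokhod minimality condition proceed by routine arguments.
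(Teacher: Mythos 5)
Your proposal is correct and follows the same overall strategy as the paper: uniqueness from Lemma \ref{lemma:3}, existence by truncating $\xi$, $f^0$ and the barrier, solving each truncated equation via the $L^2$ result of Lemma \ref{lemma:4}, and passing to the limit. The one genuine difference is in how the barrier approximation is handled, and there your version is the more careful of the two. The paper truncates the barrier with a separate index $m$ (setting $L^m_t=q_m(L_t)$), applies Lemma \ref{lemma:3} verbatim only in the index $n$ (so both solutions share the same barrier), and then disposes of the $m$-limit solely at the level of the Skorokhod condition by dominated convergence --- leaving implicit both the dependence of the limit $(Y,Z,K)$ on $m$ and the compatibility requirement $L^m_T\leq\xi_n$ across the two indices. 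You instead use a single index, check $L^n_T\leq\xi_n$ directly, and extend the stability estimate of Lemma \ref{lemma:3} to solutions with distinct barriers, controlling the residual term $\int_0^Te^{aps}|\overline{Y}_s|^{p-1}\widehat{\overline{Y}_s}\,d\overline{K}_s$ by the barrier difference times the uniform $L^p$-bound on $K^n_T$ from Lemma \ref{lemma:2}. (On the support of $dK^n$ one has $Y^n=L^n$ and $Y^m\geq L^m$, so the bound comes out as $\sup_s\bigl((L^n_s-L^m_s)^+\bigr)^{p-1}K^n_T$; after H\"older the exponent on $\|L^n-L^m\|_{\mathcal{S}^p}$ is $p-1$ rather than $1$ as you wrote, but this does not affect the convergence.) This extended stability estimate is exactly what is needed to make the double limit in the paper's argument rigorous, so your route buys a cleaner and more complete passage to the limit at the cost of one additional (standard, Hamad\`ene--Popier-style) computation.
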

\begin{proof}  The uniqueness  is an immediate consequence of Lemma
\ref{lemma:3}. We next to prove the existence.

 For each $n, m\in \mathbb{N}^{\ast}$, define
$$
\xi_n=q_n(\xi), f_n(t,x,y)=f(t,x,y)-f_t^0+q_n(f_t^0),
L^m_t=q_m(L_t),
$$
where $q_k(x)=x\frac{k}{|x|\vee k}$. One can easily to check that
the items $\xi_n, f_n$ and $L^m$ satisfy the assumptions (H1)-(H3),
it follows from Lemma \ref{lemma:4} that, for each $n, m\in
\mathbb{N}^{\ast}$, there exists a unique solution $(Y^n,Z^n,K^n)\in
L^2$ for the reflected BDSDE associated with $(\xi_n, f_n, g, L^m)$,
but in fact also in $L^p$,
according assumption (H4) and the Lemmas \ref{lemma:1} and \ref{lemma:2}. 


Next, from Lemma \ref{lemma:3},  for
$(i,n)\in\mathbb{N}\times\mathbb{N}^{\ast}$, we have
\begin{eqnarray*}
&&{\bf E}\left\{\sup_{t\in[0,T]}|Y_t^{n+i}-Y_t^n|^p+\left(\int_0^T
|Z_s^{n+i}-Z_s^n|^2ds\right)^{\frac{p}{2}}\right\}
\\&\leq& d{\bf E}\left\{|\xi_{n+i}-\xi_n|^p+\left(\int_0^T
|q_{n+i}(f_s^0)-q_n(f_s^0)|^2ds\right)^{\frac{p}{2}}\right\}.
\end{eqnarray*}
Clearly, the right side  of  above inequality tend to 0 as
$n\rightarrow \infty$, uniformly on $i$ so that $(Y^n,Z^n)$ is a
Cauchy sequence in $\mathcal {S}^p\times \mathcal{M}^p_d$. Let's
denote by $(Y,Z)\in\mathcal {S}^p\times \mathcal{M}^p_d$ it limit.
By the equation
\begin{eqnarray*}
K_t^n=Y_0^n-Y_t^n-\int_0^tf_n(s,Y_s^n,Z_s^n)ds-\int_0^tg(s,Y_s^n,Z_s^n)dB_s+\int_0^tZ_s^ndW_s,
\end{eqnarray*}
 similar computation can derive that $(K_t^n)_{n\geq 1}$ is also a Cauchy
 sequence in $\mathcal {S}^p_{ci}$, then there
 exists a non-decreasing process $K_t\in \mathcal {S}^p_{ci}\;(K_0=0)$ such
 that
\begin{eqnarray*}
{\bf E}(|K_t^n-K_t|^p)\rightarrow 0,\;\mbox{as}\; n \rightarrow
\infty
\end{eqnarray*}
and
\begin{eqnarray*}
\int_0^T(Y_s-L_s^m)dK_s=0.
\end{eqnarray*}
 By the dominated convergence theorem, we then get
\begin{eqnarray*}
\int_0^T(Y_s-L_s^m)dK_s\rightarrow\int_0^T(Y_s-L_s)dK_s,\;\mbox{as}\;
m \rightarrow \infty.
\end{eqnarray*}
It follows that the limit $(Y,Z,K)$ is a $L^p$-solution of reflected
BDSDE with  $(\xi, f,g,L)$. The proof is complete.
\end{proof}

\end{document}